\newtheorem{theorem}{Theorem}[section]
\newtheorem{lemma}[theorem]{Lemma}
\newtheorem{definition}[theorem]{Definition}
\newtheorem*{remark}{Remark}{\it}{\rm}
\newtheorem{corollary}[theorem]{Corollary}
\newtheorem*{bem}{Remark}{\it}{}
\numberwithin{equation}{section}
\newcommand{\Z}{{\mathbb Z}}
\newcommand{\rk}{{\rm rk}}
\newcommand{\hyp}{{\rm hyp}}
\newcommand{\an}{{\rm an}}
\newcommand{\N}{{\mathbb N}}
\newcommand{\Lmin}[1]{\Lambda_{#1}^{{\min}}}
\newcommand{\Lmax}[1]{\Lambda_{#1}^{{\max}}}
\newcommand{\Li}[1]{\Lambda_{#1}^{(i)}}
\newcommand{\Lj}[1]{\Lambda_{#1}^{(j)}}
\newcommand{\Lk}[1]{\Lambda_{#1}^{(k)}}
\newcommand{\Ljp}[1]{\Lambda_{#1}^{(j+1)}}
\newcommand{\Lip}[1]{\Lambda_{#1}^{(i+1)}}
\newcommand{\Lkp}[1]{\Lambda_{#1}^{(k+1)}}
\begin{document}

\title[Hyperbolic bases]
{Common hyperbolic bases for chains of alternating or quadratic lattices}  
\author[R. Schulze-Pillot]{Rainer Schulze-Pillot}
\thanks{To appear in Mathematische Annalen,  DOI:
  10.1007/s00208-019-01824-5. The final publication is available at
  Springer via https://link.springer.com/journal/208}

  \begin{abstract} We give a short and purely bilinear proof of the fact that
    two chains of $p$-elementary lattices with quadratic form or
    alternating bilinear form over $\Z_p$ or more generally over a
    complete discrete valuation ring 
    have common hyperbolic bases. This fact, 
    which is useful for the study of Bruhat-Tits buildings, 
    has been proven before with different methods by Abramenko and Nebe
    and by Frisch. 
 \end{abstract}

\maketitle

\section{Introduction.} 
Let $R$ be a complete discrete valuation ring with field of fractions
$F$, $p$ a prime element in $R$.  
Let $V$ be a finite dimensional vector space over $F$ with a non
degenerate alternating or symmetric bilinear form $b$. In the
symmetric case we assume $b$ to be associated to a  quadratic form
$Q$ on $V$ satisfying $b(x,y)=Q(x+y)-Q(x)-Q(y)$, we have then
$b(x,x)=2Q(x)$ for all $x \in V$ .
In the alternating case we set
$Q\equiv 0$.
A vector $x \in V$ is called isotropic if $x\ne 0, Q(x)=0$, a nonzero
subspace or $R$-submodule $X$ is called totally isotropic if $Q(x)=b(x,y)=0$ holds
for all $x,y \in X$, it is called anisotropic if $Q(x)\ne 0$ for all
$x\in X\setminus \{0\}$.
An $R$-lattice $\Lambda$ of maximal rank on $V$
is called $p^r$-maximal if it is maximal among the lattices satisfying
$Q(\Lambda)\subseteq p^rR$ and $b(\Lambda,\Lambda)\subseteq p^rR$, for
$r=0$ we say that the lattice is maximal. If $(V,Q)$ is anisotropic
there is a unique
$p^r$-maximal lattice on $V$, namely $\{x\in V\mid Q(x) \in p^rR\}$, see
\cite[16.1]{kneserbuch}.

It is well known (see e.g. \cite{satake}, \cite {eichler_qfog} for the
symmetric case) that any two  $p^{r_1}$ resp. $p^{r_2}$ maximal
lattices $\Lambda_1, \Lambda_2$ have a common hyperbolic basis, i.e.,
there are vectors $\{e_i,f_i\}$ of $V$ with $b(e_i,e_j)=b(f_i,f_j)=0,
b(e_i,f_j)=\delta_{ij}$ such that suitable multiples of the $e_i,f_i$
together with a basis of a maximal lattice on an anisotropic kernel of
$V$ form bases of $\Lambda_1$ and $\Lambda_2$.  As explained in
\cite{garrettbook}, in the theory of
Bruhat-Tits buildings one needs the even sharper statement that such
common bases exist for pairs of certain chains of lattices. Proofs of
such statements have been given in \cite{abramenko_nebe} using the
theory of hereditary orders and in \cite{frisch} using the concept of
$p$-adic norms from \cite{bruhat-tits}. The purpose of this note is to
give a short elementary proof of this fact using only the quadratic  and bilinear
forms. It is easy to generalize our argument to the case of hermitian
forms, we leave the details of this to the reader.  

\section{Lattice chains}
\begin{definition}
Let $\Lambda$ be a lattice on $V$ (or a subset of a lattice spanning
$V$). The dual lattice of $\Lambda$ is 
$\Lambda^\#:=\{x \in V\mid b(x,\Lambda)\subseteq R\}$. The lattice is
called $p$-elementary if $\Lambda^\#\supseteq \Lambda\supseteq
p\Lambda^\#$ holds. It is called $p^r$-modular if
$\Lambda=p^r\Lambda^\#$ holds. 
\end{definition}
\begin{remark}
It is well known that $\Lambda^\#$ is a lattice on $V$ and that
$(\Lambda^\#)^\#=\Lambda$ holds. The lattice is $p$-elementary if and
only if $pR\subseteq b(x,\Lambda)\subseteq R$ holds for all primitive vectors
$x \in \Lambda$.   
\end{remark}

We need a few preparations for the case of a symmetric bilinear
form. 
\begin{definition}
 Let $(V,Q)$ be a regular quadratic space over $F$ and $\Lambda$ a
 lattice on $V$.
 \begin{enumerate}
 \item $\Lambda$ is called integral if $b(\Lambda,\Lambda)\subseteq
   R$, even if $Q(\Lambda)\subseteq R$, totally even if it is integral
   and $Q(x)\in b(x,
   \Lambda)$ holds for all $x \in \Lambda$.
   \item We say that the lattice $\Lambda$ is almost $p$-elementary
     totally even (or of $p$-elementary totally even type) if $V$ has
     a Witt decomposition $V=V^\hyp\perp V^\an$, where $V^\hyp$ is a sum of
     hyperbolic planes and $V^\an$ is anisotropic, such that  
     $\Lambda=\Lambda\cap V^\hyp\perp \Lambda\cap V^\an$ with
     $\Lambda^\hyp:=\Lambda\cap V^\hyp$  totally
     even and $p$-elementary and $\Lambda^\an:=\Lambda \cap V^\an=\{x\in
     V^\an\mid Q(x)\in R\}$  the unique $R$-maximal
     lattice on $V^\an$.
     \item If $\Lambda$ is almost $p$-elementary totally even, the
       modified dual $\Lambda^\ast$ is $\{x\in \Lambda^\# \mid
       pQ(x)\in R\}$ with the quadratic form $Q^\ast:=pQ$ and the
       bilinear form $b^\ast:=pb$.
 \end{enumerate}
\end{definition}
\begin{remark}
  \begin{enumerate}
    \item Part c) of this definition is  a slight modification of the
      one given by Frisch \cite{frisch},
      who also proved a version of the next two lemmata.
      \item In particular, a maximal lattice on $V$ is almost $p$-elementary
totally even.  
  \item If we set $Q=0$ for an alternating bilinear form $b$, the
    definition of almost $p$-elementary totally even above coincides
    with $p$-elementary. We will use ``almost $p$-elementary totally
    even'' for both types of $b$ in what follows.

    We also use some more terminology from the symmetric case in the
alternating case as well.  In particular we generalize the notion of dual
lattice and $p^r$-modular lattice to the alternating case in the
obvious way and call a lattice $Rx+Ry$ with
$Q(x)=Q(y)=0, b(x,y)=p^r$ a $p^r$-modular hyperbolic plane.

  \end{enumerate}
\end{remark}

\begin{lemma}\label{modified_doubledual}
  The modified dual $(\Lambda^\ast, Q^\ast)$ of an almost $p$-elementary totally even lattice
  is  almost
  $p$-elementary totally even and one has
  \begin{equation*}
    ((\Lambda,Q)^\ast)^\ast=(p^{-1}\Lambda, p^2Q)\cong (\Lambda, Q).
  \end{equation*}
\end{lemma}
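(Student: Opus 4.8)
The plan is to pass to the Witt decomposition and treat the two summands separately. Since scaling the form by $p$ turns hyperbolic planes into hyperbolic planes and keeps an anisotropic space anisotropic, the orthogonal splitting $V=V^\hyp\perp V^\an$ is also a Witt decomposition for $Q^\ast=pQ$. First I would fix a Jordan splitting $\Lambda^\hyp=L_0\perp L_1$ with $L_0$ unimodular and $L_1$ $p$-modular (possible since $\Lambda^\hyp$ is $p$-elementary); both $L_0,L_1$ inherit total evenness from $\Lambda^\hyp$ by orthogonality, since $Q(x_0)\in b(x_0,\Lambda^\hyp)=b(x_0,L_0)$ for $x_0\in L_0$ and likewise for $L_1$. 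Then $(\Lambda^\hyp)^\#=L_0\perp p^{-1}L_1$, and using $p(\Lambda^\hyp)^\#\subseteq\Lambda^\hyp$ together with total evenness one checks $pQ\bigl((\Lambda^\hyp)^\#\bigr)\subseteq R$; hence the condition $pQ(x)\in R$ defining $\Lambda^\ast$ is vacuous on the $V^\hyp$-part and constrains only the $V^\an$-part, so that
\[
\Lambda^\ast=(\Lambda^\hyp)^\#\perp\bigl((\Lambda^\an)^\#\cap\{x\in V^\an\mid pQ(x)\in R\}\bigr)=(\Lambda^\hyp)^\ast\perp(\Lambda^\an)^\ast.
\]

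For the hyperbolic summand I would show directly that $(\Lambda^\hyp)^\ast=L_0\perp p^{-1}L_1$, equipped with $b^\ast=pb$ and $Q^\ast=pQ$, is again totally even and $p$-elementary. It is $p$-elementary for $pb$ because the $pb$-dual of $(\Lambda^\hyp)^\#$ is $p^{-1}\Lambda^\hyp$, and the inclusions $\Lambda^\hyp\subseteq(\Lambda^\hyp)^\#\subseteq p^{-1}\Lambda^\hyp$ are exactly $p$-elementarity for it; it is totally even because the total evenness of the Jordan pieces, $Q(x_0)\in b(x_0,L_0)$ for $x_0\in L_0$ and $Q(z)\in b(z,L_1)$ for $z\in L_1$, rescales to $pQ(x_0)\in pb(x_0,L_0)$ and $pQ(p^{-1}z)\in pb(p^{-1}z,p^{-1}L_1)$. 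Iterating the identical computation (now for $(V^\hyp,pQ)$, whose Jordan splitting has $p^{-1}L_1$ unimodular and $L_0$ $p$-modular) gives $\bigl((\Lambda^\hyp)^\ast\bigr)^\ast=p^{-1}(L_0\perp L_1)=p^{-1}\Lambda^\hyp$ with form $p^2Q$.

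For the anisotropic summand, write $M:=\Lambda^\an=\{x\in V^\an\mid Q(x)\in R\}$, the unique maximal lattice on $(V^\an,Q)$. Then $(\Lambda^\an)^\ast=M^\#\cap\{x\in V^\an\mid pQ(x)\in R\}$, and the crux is the inclusion $\{x\in V^\an\mid Q(x)\in p^{-1}R\}\subseteq M^\#$, which makes $(\Lambda^\an)^\ast$ equal to $\{x\in V^\an\mid Q^\ast(x)\in R\}$, the unique maximal lattice on $(V^\an,Q^\ast)$. Granting this, iterating once more shows $\bigl((\Lambda^\an)^\ast\bigr)^\ast$ is the maximal lattice on $(V^\an,p^2Q)$, namely $\{x\mid p^2Q(x)\in R\}=\{x\mid Q(px)\in R\}=p^{-1}M=p^{-1}\Lambda^\an$, again with form $p^2Q$. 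The inclusion $\{Q\in p^{-1}R\}\subseteq M^\#$ is the one step that is not pure bookkeeping, and I expect it to be the main obstacle: it says that on an anisotropic space $b(x,y)$ is integral whenever $v(Q(x))\ge-1$ and $v(Q(y))\ge0$. I would reduce it, via $b(x,y)=Q(x+y)-Q(x)-Q(y)$ on the binary subspace $Fx+Fy$ (splitting off the case $x\in Fy$, which is a one-line valuation estimate), to the structure of maximal lattices on anisotropic binary spaces over a complete discrete valuation ring, as in \cite{frisch}; note that $M\subseteq M^\#$ always holds (an easy computation shows $M$ is integral), so this is only a mild refinement of a weaker inclusion.

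Finally I would assemble: $\Lambda^\ast=(\Lambda^\hyp)^\ast\perp(\Lambda^\an)^\ast$ is almost $p$-elementary totally even for $Q^\ast$ (totally even $p$-elementary hyperbolic part, maximal anisotropic part, with the Witt decomposition $V=V^\hyp\perp V^\an$), and since the modified dual commutes with the orthogonal splitting,
\[
(\Lambda^\ast)^\ast=\bigl((\Lambda^\hyp)^\ast\bigr)^\ast\perp\bigl((\Lambda^\an)^\ast\bigr)^\ast=(p^{-1}\Lambda^\hyp,p^2Q)\perp(p^{-1}\Lambda^\an,p^2Q)=(p^{-1}\Lambda,p^2Q),
\]
and multiplication by $p$ is an isometry of $(p^{-1}\Lambda,p^2Q)$ onto $(\Lambda,Q)$. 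In the alternating case $Q\equiv0$, so $V^\an=0$, only the hyperbolic computation remains, and it simplifies since total evenness reduces to $p$-elementarity.
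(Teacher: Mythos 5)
Your decomposition of the problem is the same as the paper's: split $\Lambda^\ast$ along the Witt decomposition, observe that the condition $pQ(x)\in R$ is automatic on $(\Lambda^\hyp)^\#$ so that $\Lambda^\ast=(\Lambda^\hyp)^\#\perp\{x\in(\Lambda^\an)^\#\mid pQ(x)\in R\}$, handle the hyperbolic part by rescaling a Jordan splitting (the paper dismisses this as ``obvious''; your explicit verification via $L_0\perp p^{-1}L_1$ is correct), and identify the anisotropic part with the maximal lattice on $(V^\an,pQ)$. Your bookkeeping for the double dual and the final assembly are also fine.

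The genuine gap is exactly the step you flag yourself: the inclusion $\{x\in V^\an\mid Q(x)\in p^{-1}R\}\subseteq(\Lambda^\an)^\#$. You reduce it to binary anisotropic subspaces and then appeal to the classification of maximal lattices on such spaces; that route can be made to work, but it is not carried out (the dyadic and ramified cases need real care, and the scaling of the norm form is a further wrinkle), and it imports structure theory that the paper is explicitly trying to avoid. The paper closes this in two lines with no classification at all: suppose $Q(x)\in p^{-1}R$ but $x\notin(\Lambda^\an)^\#$. Since $\Lambda^\an$ is the maximal lattice $M=\{z\mid Q(z)\in R\}$, one can rescale a vector witnessing non-integrality to get $y\in M$ with $b(x,y)=p^{-1}$; then $Q(x)=-b(x,ay)$ for some $a\in R$, whence $Q(x+ay)=Q(x)+b(x,ay)+a^2Q(y)=a^2Q(y)\in R$, so $x+ay\in M$ and therefore $x\in M\subseteq M^\#$ (as $M$ is integral), a contradiction. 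Replacing your appeal to binary structure theory by this cancellation argument completes your proof and keeps it purely bilinear, which is the point of the paper.
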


\begin{proof}
We write $\Lambda=\Lambda^\hyp\perp\Lambda^\an$ and have
$\Lambda^\#=(\Lambda^\hyp)^\#\perp (\Lambda^\an)^\#$ with $pQ(x) \in
R$ for all $x \in (\Lambda^\hyp)^\#$ and therefore
$\Lambda^*=(\Lambda^\hyp)^\#\perp \{ x \in (\Lambda^\an)^\#\mid
pQ(x)\in R\}$. Obviously,
$(\Lambda^\hyp)^\#$, equipped with $pQ, pb$, is $p$-elementary totally
even. For the anisotropic part, the set  $\{x\in (\Lambda^\an)^\#\mid
pQ(x)\in R\}$ is a lattice which clearly is contained in the unique maximal lattice on
$(V^\an, pQ)$. For the reverse inclusion, let $x \in V^\an$ with $Q(x)
\in p^{-1}R$. If one had $x\not\in (\Lambda^\an)^\#$ 
there would exist $y \in \Lambda^\an$ with $b(x,y)=p^{-1}$, hence
$Q(x)=-b(x,ay)$ with $a \in R$. This gives $Q(x+ay)\in R$, hence $x+ay
\in \Lambda^\an\subseteq (\Lambda^\an)^\#$, which contradicts $x\not\in
(\Lambda^\an)^\#$, and we see that the maximal lattice on
$(V^\an, pQ)$ is indeed contained in  $\{x\in (\Lambda^\an)^\#\mid
pQ(x)\in R\}$.

Finally, the same argument shows that
$(\Lambda^*,pb)^*=p^{-1}\Lambda^\hyp\perp \{x\in V^\an\mid p^2Q(x)\in
R\}=p^{-1}\Lambda$ with $p^{-1}\Lambda$ equipped with $p^2Q$.
\end{proof}
In what follows we will identify $(p^{-1}\Lambda,p^2Q)$ with
$(\Lambda,Q)$ and will therefore write
$(\Lambda^\ast)^\ast=\Lambda$. We will also treat both cases of $b$
(symmetric or alternating) at the same time in what follows.
\begin{lemma}
Let $\Lambda$ be an almost $p$-elementary totally even lattice on $V$
and let $J\subseteq\Lambda$ be a unimodular or $p$-modular hyperbolic
plane which splits $\Lambda$, i.e., $\Lambda=J\perp \Lambda_1$. Then
$\Lambda_1$ is almost $p$-elementary totally even.  
\end{lemma}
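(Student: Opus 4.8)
The plan is to peel off, with a few elementary manipulations of dual lattices, everything except one core fact --- that the orthogonal group of an almost $p$-elementary totally even lattice acts transitively on its primitive isotropic vectors --- which is then the classical Eichler-transformation argument.

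First the easy part. From $\Lambda=J\perp\Lambda_1$ one gets $\Lambda^\#=J^\#\perp\Lambda_1^\#$, the first summand being the dual of $J$ inside $\operatorname{span}(J)$ and the second the dual of $\Lambda_1$ inside $W:=\operatorname{span}(J)^\perp$. Since $J$ is unimodular or $p$-modular, $J^\#$ is $J$ or $p^{-1}J$, so $J$ is itself $p$-elementary; hence $\Lambda^\#\supseteq\Lambda\supseteq p\Lambda^\#$ restricts to $\Lambda_1^\#\supseteq\Lambda_1\supseteq p\Lambda_1^\#$, i.e. $\Lambda_1$ is $p$-elementary, and since $Q(\Lambda^\hyp)\subseteq R$ (total evenness) and $Q(\Lambda^\an)\subseteq R$ (maximality) we also get $Q(\Lambda_1)\subseteq R$. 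In the alternating case $V^\an=0$, so ``almost $p$-elementary totally even'' just means ``$p$-elementary'' and the lemma is proved; from now on I treat the quadratic case.

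Now the reduction. Using the Jordan decomposition of the $p$-elementary lattice $\Lambda$, the fact that $J$ is a $p^r$-modular hyperbolic plane splitting it, and the anisotropy of $\Lambda^\an$, one sees that $\Lambda^\hyp$ has a $p^r$-modular hyperbolic plane $H$ as an orthogonal direct summand; write $\Lambda^\hyp=H\perp\Lambda_0$. Then $\Lambda\cap H^\perp=\Lambda_0\perp\Lambda^\an$ is again almost $p$-elementary totally even: $\Lambda_0$ is $p$-elementary, it is totally even since $Q(v)\in b(v,\Lambda^\hyp)=b(v,\Lambda_0)$ for $v\in\Lambda_0$, the space $\operatorname{span}(\Lambda_0)$ is hyperbolic as the orthogonal complement of a hyperbolic plane in a hyperbolic space, and $\Lambda^\an$ is untouched. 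As $\Lambda_1=\Lambda\cap J^\perp$, it now suffices to find $\sigma$ in the orthogonal group of $(V,Q)$ with $\sigma(\Lambda)=\Lambda$ and $\sigma(J)=H$, for then $\Lambda_1\cong\sigma(\Lambda)\cap\sigma(J)^\perp=\Lambda\cap H^\perp$ is almost $p$-elementary totally even. In other words the lemma follows from the transitivity of $O(\Lambda)$ on the $p^r$-modular hyperbolic planes splitting $\Lambda$. Here the case $r=1$ reduces to $r=0$ by passing to the modified dual: by Lemma~\ref{modified_doubledual} the lattice $(\Lambda^\ast,pQ)$ is almost $p$-elementary totally even, a direct computation gives $\Lambda^\ast=p^{-1}J\perp(\cdots)=p^{-1}H\perp(\cdots)$ with $p^{-1}J$ and $p^{-1}H$ unimodular hyperbolic planes splitting $\Lambda^\ast$, and $O(\Lambda,Q)=O(\Lambda^\ast,pQ)$. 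For $r=0$ it is enough to have transitivity on primitive isotropic vectors: choosing an isotropic basis vector $x$ of $J=Rx\oplus Ry$ and one, $e_0$, of $H=Re_0\oplus Rf_0$, pick $\tau\in O(\Lambda)$ with $\tau(x)=e_0$; then $\tau(J)$ and $H$ both split $\Lambda$ and contain $e_0$, and the single Eichler transformation $z\mapsto z+b(z,e_0)v-b(z,v)e_0-Q(v)b(z,e_0)e_0$ with $v=f_0-\tau(y)$, which lies in $e_0^\perp\cap\Lambda$, fixes $e_0$, preserves $\Lambda$, and carries $\tau(J)$ onto $H$.

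The substantive point --- the main obstacle --- is precisely this last transitivity statement for almost $p$-elementary totally even lattices. It is proved by the usual argument with Eichler transformations, building from a primitive isotropic vector and its pairing with $\Lambda$ (which contains $pR$) a chain of transformations moving it to a fixed one, but it requires care to keep every transformation parameter in $R$ so that $\Lambda$ is preserved --- this is where primitivity, $p$-elementarity and total evenness all get used --- and to handle, over a complete discrete valuation ring whose residue field has characteristic $2$, the auxiliary step replacing a pairing vector by an isotropic one. All the other ingredients --- the orthogonal splittings of dual lattices, the descent of total evenness, the extraction of the summand $H$, and the reduction from $r=1$ to $r=0$ --- are routine computations with the bilinear and quadratic forms.
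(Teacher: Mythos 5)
Your reduction follows essentially the same route as the paper's own proof: the paper also splits the hyperbolic part as $K_1\perp\dots\perp K_s$ into unimodular or $p$-modular hyperbolic planes, argues that a unimodular splitting plane $J$ forces one of the $K_i$ to be unimodular, produces $\sigma\in O(\Lambda)$ with $\sigma(J)=K_1$, and disposes of the $p$-modular case by passing to the (modified) dual, exactly as you do. The problem is that the step you yourself single out as ``the main obstacle'' --- transitivity of $O(\Lambda)$ on the relevant isotropic vectors --- is only described, not proved (``the usual argument with Eichler transformations \dots requires care''). That step \emph{is} the content of the lemma; everything else is bookkeeping. The paper discharges it by citing Folgerung~4.4 of \cite{kneserbuch}, the Witt-type extension theorem for lattices over a complete discrete valuation ring, applied directly to the two unimodular hyperbolic planes. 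As written, your proof has a genuine gap here: you must either invoke that result explicitly or actually carry out the Eichler-transformation argument, including the residue-characteristic-$2$ complications you allude to.

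Two further points need tightening. First, the transitivity statement as you formulate it --- ``$O(\Lambda)$ acts transitively on the primitive isotropic vectors of $\Lambda$'' --- is false whenever $\Lambda^{\hyp}$ has both unimodular and $p$-modular components, since $b(x,\Lambda)\in\{R,pR\}$ is an orbit invariant separating the isotropic generators of a unimodular plane from those of a $p$-modular one. What you need (and what you actually use, since both $x$ and $e_0$ generate unimodular splitting planes) is transitivity on primitive isotropic $x$ with $b(x,\Lambda)=R$; state it that way. Second, your extraction of the summand $H$ ``from the Jordan decomposition'' is too quick: when $\Lambda^{\an}$ itself has a unimodular Jordan constituent, rank and scale invariants alone do not exclude that every $K_i$ is $p$-modular, and over residue characteristic $2$ the Jordan components are not determined up to isometry. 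The robust argument, which is the paper's, is that a unimodular $J$ yields an isotropic $x\in\Lambda$ with $b(x,\Lambda)=R$, and the existence of such a vector is equivalent to some $K_i$ being unimodular. The rest of your write-up (the dualization giving $p$-elementarity of $\Lambda_1$, the reduction from the $p$-modular to the unimodular case via Lemma~\ref{modified_doubledual}, and the single Eichler transformation aligning $\tau(J)$ with $H$) is correct.
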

\begin{proof}
By definition we have $\Lambda=\Lambda'\perp\Lambda''$, where
$V^\hyp=V'=F\Lambda'$ is a sum of hyperbolic planes,
$V^\an=V''=F\Lambda''$ is 
anisotropic, and $\Lambda'$ is totally
even $p$-elementary on $V'$.
$\Lambda'$ has a splitting $\Lambda'=K_1\perp \dots\perp K_s$  into
binary lattices $K_i$ which are unimodular or $p$-modular hyperbolic
planes. At least one of the $K_i$ is unimodular if and only if there
exists an isotropic vector $x\in \Lambda$ with $b(x, \Lambda)=R$, in
particular, if $J$ is unimodular, at least one of the $K_i$, say
$K_1$, is unimodular. By \cite[Folgerung 4.4]{kneserbuch} there exists $\sigma
\in O(\Lambda)$ with $\sigma(J)=K_1$ and hence
$\sigma(\Lambda_1)=K_1^\perp$, the latter one being almost
$p$-elementary totally even. Hence $\Lambda_1$ is almost $p$-elementary
totally even too.

If $J$ is $p$-modular, $J^\ast$ is unimodular and splits
$\Lambda^\ast$. By the first case, the orthogonal complement
$\Lambda_2$ of
$J^\ast$ in $\Lambda^\ast$ is almost $p$-elementary totally even, which implies that
$\Lambda_1=(\Lambda_2)^\ast$ is almost $p$-elementary totally even.
\end{proof}
\begin{lemma}\label{properties_p-elementary}
Let $\Lambda=\Lambda^{\hyp}\perp\Lambda^\an$ be an almost $p$-elementary totally even lattice
containing isotropic vectors.
\begin{enumerate}
\item If the hyperbolic part $\Lambda^\hyp$ of $\Lambda$ is not a sum
  of 
  $p$-modular hyperbolic planes or $\Lambda=\Lambda^\hyp$ holds, $\Lambda$ is generated
  by its isotropic vectors.
\item If $\Lambda^\hyp$ is a sum of $p$-modular hyperbolic planes, the
  isotropic vectors of $\Lambda$ generate the sublattice $\{x\in
  \Lambda\mid Q(x) \in pR\}$.
  \item One has 
    $\{z\in p\{x\in \Lambda \mid Q(x)\in pR\}^\#\mid Q(z)\in R\}\subseteq \Lambda$.
\end{enumerate}
\end{lemma}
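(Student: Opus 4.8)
The plan is to treat the three parts of Lemma~\ref{properties_p-elementary} by reducing everything to the hyperbolic part, since on the anisotropic part $\Lambda^\an$ there are no isotropic vectors and $Q$ is anisotropic. First I would fix a splitting $\Lambda^\hyp = K_1 \perp \dots \perp K_s$ into unimodular or $p$-modular hyperbolic planes, with bases $\{e_i, f_i\}$ so that $K_i = Re_i + Rf_i$, $Q(e_i)=Q(f_i)=0$, and $b(e_i,f_i) = p^{\epsilon_i}$ with $\epsilon_i \in \{0,1\}$. Then $\Lambda^\an$ is the maximal lattice $\{x \in V^\an \mid Q(x) \in R\}$. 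The key observation for parts (a) and (b) is that every $e_i$ and $f_i$ is isotropic and lies in $\Lambda$, so the sublattice generated by the isotropic vectors of $\Lambda$ always contains $\Lambda^\hyp$; the only question is what it does to $\Lambda^\an$.

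For part (a), the hypothesis is that some $K_i$, say $K_1$, is unimodular (or $\Lambda = \Lambda^\hyp$, in which case there is nothing more to do). Given any $v \in \Lambda^\an$, I want to write $v$ as a combination of isotropic vectors of $\Lambda$. The idea is to use $e_1$ as a ``lever'': for suitable $a \in R$ the vector $v + a e_1$ is isotropic, because $Q(v + ae_1) = Q(v) + a\,b(v,e_1) + a^2 Q(e_1) = Q(v)$ since $b(v,e_1)=0$ (orthogonality of the Witt components) and $Q(e_1)=0$ --- wait, that gives $Q(v+ae_1)=Q(v)\neq 0$, so this does not directly work. Instead I would use that $K_1$ unimodular means $b(e_1,f_1)=1$, and consider $w = v + a e_1 + c f_1$ with $Q(w) = Q(v) + a c\, b(e_1,f_1) = Q(v) + ac$; choosing $c = -Q(v)$ and $a = 1$ makes $w$ isotropic and $w \in \Lambda$, so $v = w - e_1 + Q(v) f_1$ is a sum of isotropic vectors. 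This shows $\Lambda$ is generated by its isotropic vectors. For part (b), $\Lambda^\hyp$ is a sum of $p$-modular planes, so $b(\Lambda,\Lambda) \subseteq pR$ and hence for any primitive isotropic $x$ and any $y \in \Lambda$ we get $b(x,y) \in pR$; thus $b(x,\Lambda) \subseteq pR$ for isotropic $x$. I would show the lattice $L$ generated by isotropic vectors is contained in $\{x \in \Lambda \mid Q(x) \in pR\}$ --- a priori not obvious since a sum of isotropic vectors need not have $Q$-value in $pR$, so here one checks it using $b(x,y)\in pR$ for isotropic $x,y$ and $Q(x)=0$: $Q(\sum x_i) = \sum Q(x_i) + \sum_{i<j} b(x_i,x_j) \in pR$. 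Conversely, given $v \in \Lambda$ with $Q(v) \in pR$, I repeat the lever trick with a $p$-modular plane $K_1 = Re_1 + Rf_1$, $b(e_1,f_1)=p$: set $w = v + e_1 + c f_1$ with $Q(w) = Q(v) + cp$, choose $c = -Q(v)/p \in R$, so $w$ is isotropic, $w\in\Lambda$, and $v = w - e_1 + (Q(v)/p) f_1$ expresses $v$ via isotropic vectors (note $e_1, f_1$ isotropic). For the anisotropic contribution one treats $v$'s $V^\an$-component together with an $e_1$ in exactly the same way.

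For part (c), write $M := \{x \in \Lambda \mid Q(x) \in pR\}$. If $\Lambda^\hyp$ is not a sum of $p$-modular planes then by part (a) $\Lambda$ is generated by isotropic vectors, all of which lie in $M$ when... no: in that case $M$ may be a proper sublattice, so I would instead compute $M$ and $M^\#$ directly from the splitting. On each unimodular plane $K_i = Re_i+Rf_i$ with $b(e_i,f_i)=1$, $M \cap K_i = \{ae_i + bf_i \mid ab \in pR\} = Re_i + Rpf_i \;\cup\; Rpe_i + Rf_i$ --- more precisely $M\cap K_i$ is not a direct sum but equals $\{ae_i+bf_i : p \mid ab\}$, whose dual (after rescaling) is easy to pin down; on each $p$-modular plane $K_i$ we have $Q(K_i) \subseteq pR$ already, so $M \cap K_i = K_i$; on $\Lambda^\an$, $M \cap \Lambda^\an = \{x \in V^\an \mid Q(x) \in pR\}$, which is the $p$-maximal lattice $p\Lambda^{\an\#}$-type object. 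Then $p M^\#$ decomposes compatibly, and on each block one checks $\{z \in pM^\# \mid Q(z) \in R\} \subseteq \Lambda$ by an explicit coordinate computation: on a unimodular $K_i$ the condition $Q(z)\in R$ forces $z$ back into $K_i \subseteq \Lambda$; on a $p$-modular $K_i$ one has $pM^\# \cap (\text{that block}) = p(p^{-1}K_i) = K_i$ hmm, need to be careful with the dual of a $p$-modular plane being $p^{-1}$-modular; and on the anisotropic block one invokes the maximality characterization of $\Lambda^\an$ exactly as in the proof of Lemma~\ref{modified_doubledual}.

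The main obstacle I expect is part (c): there is no clean way to avoid breaking $M$ and its dual into the mixed unimodular/$p$-modular/anisotropic blocks and doing the coordinate bookkeeping, and the unimodular blocks are the delicate ones because $M \cap K_i$ is not itself a direct orthogonal sum of modular planes, so its dual lattice has to be computed by hand; one must check that the ``$Q(z) \in R$'' cutoff exactly compensates the $p$-scaling in $pM^\#$ so that nothing escapes $\Lambda$. The anisotropic part, by contrast, is handled uniformly by the maximality argument already used above, and the $p$-modular blocks are trivial since they contribute nothing new. A secondary subtlety throughout is keeping straight that in the alternating case $Q \equiv 0$, so all the $Q$-conditions become vacuous and parts (a)--(c) degenerate to statements about $p$-elementary lattices that are either immediate or follow from the standard splitting into modular hyperbolic planes.
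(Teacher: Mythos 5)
Your parts a) and b) follow essentially the paper's own route: your ``lever trick'' $w=v+ae_1+cf_1$ is precisely the paper's observation that for any $x\in\Lambda$ with $Q(x)\in Q(\Lambda^\hyp)$ one can find $y\in\Lambda^\hyp$ with $Q(x+y)=0$, with the two cases distinguished by whether $Q(\Lambda^\hyp)$ equals $R$ (some unimodular plane) or $pR$ (only $p$-modular planes). For part c) the paper is considerably slicker: by a) and b), the lattice generated by $M:=\{x\in \Lambda\mid Q(x)\in pR\}$ is either all of $\Lambda$ (when a unimodular plane is present, since $M$ contains every isotropic vector) or is exactly $\Lambda^\hyp\perp\{x\in V^\an\mid Q(x)\in pR\}$ (when all planes are $p$-modular, because then $Q(\Lambda^\hyp)\subseteq pR$ makes the condition $Q(x)\in pR$ depend only on the anisotropic component). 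One then dualizes once, multiplies by $p$, and uses $p(\Lambda^\hyp)^\#\subseteq \Lambda^\hyp$ on the hyperbolic part together with $\{z\in V^\an\mid Q(z)\in R\}=\Lambda^\an$ on the anisotropic part; no plane-by-plane bookkeeping is required.

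Two inaccuracies in your write-up need repair, though neither is fatal. First, in b) you assert $b(\Lambda,\Lambda)\subseteq pR$; this is false whenever $\Lambda^\an\ne 0$ (an unramified binary anisotropic $\Lambda^\an$ is unimodular). What is true, and all you need, is $Q(\Lambda^\hyp)\subseteq pR$ (total evenness on $p$-modular planes), which shows directly that $M=\Lambda^\hyp\perp\{x\in V^\an\mid Q(x)\in pR\}$ is a sublattice containing all isotropic vectors, so the containment direction of b) is free and your computation with $\sum_{i<j}b(x_i,x_j)$ is unnecessary. Second, in c) you treat $M$ and $M^\#$ block by block, but $M$ and the lattice it generates do \emph{not} split along $K_1\perp\dots\perp K_s\perp \Lambda^\an$ when a unimodular plane coexists with a nontrivial anisotropic part: for any $v\in\Lambda^\an$ the vector $v+e_1-Q(v)f_1$ lies in $M$, so $\langle M\rangle=\Lambda$, which is strictly larger than the orthogonal sum of the blockwise pieces. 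Consequently your blockwise ``dual'' is only a superset of the true $M^\#$. This happens to be harmless --- proving $\{z\in pD\mid Q(z)\in R\}\subseteq\Lambda$ for a lattice $D\supseteq M^\#$ is a stronger statement --- but you should say so explicitly, and you must also note that the condition $Q(z)\in R$ controls the anisotropic component $z^\an$ only after observing that the hyperbolic component satisfies $Q(z^\hyp)\in R$ (since $z^\hyp\in p(\Lambda^\hyp)^\#\subseteq\Lambda^\hyp$); on the hyperbolic blocks themselves the $Q$-condition plays no role.
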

\begin{proof}
The first two assertions follow from the fact that $\Lambda^\hyp$ is
obviously generated by its isotropic vectors and that for any vector $x
\in \Lambda$ satisfying $Q(x)\in Q(\Lambda^\hyp)$ one can find $y\in
\Lambda^\hyp$ with $Q(x+y)=0$.
For the last part of the lemma the set $ \{x\in \Lambda \mid Q(x)\in
pR\}$ generates $\Lambda$ if $\Lambda^\hyp$ is not a sum of $p$-modular
hyperbolic planes and is equal to $ \{x\in \Lambda \mid Q(x)\in
pR\}=\Lambda^\hyp\perp \{x\in V^\an \mid Q(x)\in pR\}$
otherwise. Taking duals and multiplying by $p$ we obtain the assertion.
\end{proof}

\begin{definition}\label{lattice_chain} Let $n$ denote the Witt index
  of $V$, i.e., the dimension of a maximal totally isotropic subspace.
  
 A maximal admissible lattice chain ${\mathcal L}$ in $(V,Q)$ is a chain of lattices
 $\Lambda^{\max}=\Lambda^{(0)}\supsetneq \Lambda^{(1)}\dots \supsetneq
 \Lambda^{(n)}=\Lambda^{(\min)}$, where $\Lambda^{(0)}$ is a maximal lattice on $V$,
 $n$ is the Witt index of $V$, and each $\Lambda^{(j)}$ is almost 
   $p$-elementary  totally even.
\end{definition}

\begin{remark}
 The last lattice $\Lambda^{\min}$ of a maximal admissible lattice chain is the
 orthogonal sum of $p$-modular hyperbolic planes  and a maximal
 lattice on an anisotropic space, whereas in $\Lambda^{\max}$ the hyperbolic
 planes occurring are all unimodular. Moreover, we have
 $p\Lambda^{\max}\subseteq \Lambda^{\min}$.
\end{remark}

\section{Hyperbolic bases}
\begin{theorem}\label{isotropic_bases_theorem}
  Let $\Lambda\subseteq V$ be an almost $p$-elementary totally even
  $R$-lattice on $V$.

  Let $X$ be a maximal
  totally isotropic submodule of $\Lambda$. 
Then there are a basis
  $(e_1,\ldots, e_n)$ of $X$ and  vectors $f_1, \ldots,f_n
  \in \Lambda$  generating a totally isotropic submodule of $\Lambda$
  and satisfying $b(e_i,f_j)\in \{\delta_{ij},p\delta_{ij}\}$ such
  that 
\begin{equation}
\Lambda=\bigoplus_{i=1}^nRe_i\oplus
  \bigoplus_{i=1}^n Rf_i\perp K,\end{equation}
 where $K$ is the unique maximal lattice on an
  anisotropic subspace $FK$ of $V$.
\end{theorem}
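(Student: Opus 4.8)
The plan is to induct on the number of hyperbolic planes in a Witt decomposition of $\Lambda$, i.e. on the Witt index $n$ of $V$; the case $n=0$ is trivial since then $\Lambda=K$ is the unique maximal lattice on the anisotropic space $V$. For the inductive step, start with a maximal totally isotropic submodule $X=Re_1\oplus\cdots\oplus Re_n$ of $\Lambda$ and try to peel off one hyperbolic plane. The key is to locate, for a chosen basis vector $e_n$ of $X$, a partner $f_n\in\Lambda$ with $b(e_n,f_n)$ a unit or $p$ (as forced by $p$-elementarity) and $Q(f_n)=0$, so that $Re_n\oplus Rf_n$ is a unimodular or $p$-modular hyperbolic plane that \emph{splits} $\Lambda$. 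Having produced such a splitting $\Lambda=(Re_n\oplus Rf_n)\perp\Lambda_1$, Lemma 2.6 tells us $\Lambda_1$ is again almost $p$-elementary totally even; then $X\cap\Lambda_1$ is a maximal totally isotropic submodule of $\Lambda_1$ (one checks $e_1,\dots,e_{n-1}$, suitably adjusted, lie in $\Lambda_1$), and the induction hypothesis applied to $\Lambda_1$ finishes the construction, assembling the $e_i,f_i$ and the anisotropic kernel $K$.

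The main work — and the main obstacle — is the production of the partner $f_n$ realizing a \emph{splitting} hyperbolic plane. First one must see that a chosen primitive isotropic $e_n$ admits \emph{some} $y\in\Lambda$ with $b(e_n,y)$ generating $b(e_n,\Lambda)$, which by the Remark after Definition 2.1 is either $R$ or $pR$; this uses that $e_n$ is part of a basis of the totally isotropic direct summand $X$, hence primitive in $\Lambda$, together with nondegeneracy of $b$ on $V$. Write $b(e_n,y)=\varepsilon$ with $\varepsilon\in\{1,p\}$ after scaling. Then one corrects $y$ to make it isotropic: in the symmetric case, using that $\Lambda$ is totally even we have $Q(y)\in b(y,\Lambda)$, and one adjusts $y$ by a suitable multiple of $e_n$ and of vectors in $X$ (which are orthogonal to $e_n$ and isotropic) to kill $Q$; here one must check the correction stays inside $\Lambda$ and does not disturb $b(e_n,\cdot)$ — this is where the totally even hypothesis and the structure of $X$ are essential, and it is the step most prone to case distinctions between the unimodular and $p$-modular situations. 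In the alternating case $Q\equiv0$ and this correction is vacuous.

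Once $f_n$ is chosen with $Q(f_n)=0$ and $b(e_n,f_n)=\varepsilon$, one verifies that $J:=Re_n\oplus Rf_n$ is a unimodular (if $\varepsilon$ a unit) or $p$-modular (if $\varepsilon=p$) hyperbolic plane and that it splits $\Lambda$: the projection of $\Lambda$ onto $FJ$ along $(FJ)^\perp$ must land back in $J$, which follows because for any $x\in\Lambda$ the coefficients in $x\equiv \varepsilon^{-1}b(x,f_n)e_n+\varepsilon^{-1}b(x,e_n)f_n\pmod{(FJ)^\perp}$ lie in $R$ — again using $b(x,e_n)\in b(e_n,\Lambda)=\varepsilon R$ and, for the other coefficient, that $b(x,f_n)\in R$ since $\Lambda$ is integral when $J$ is unimodular, while the $p$-modular case is handled by passing to the modified dual $\Lambda^\ast$ (where $J^\ast$ is unimodular) exactly as in the proof of Lemma 2.6, transporting the splitting back via $(\Lambda^\ast)^\ast=\Lambda$. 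With the splitting in hand, the induction proceeds, and one records that the resulting $f_1,\dots,f_n$ generate a totally isotropic submodule because each was built isotropic and orthogonal to all previously removed planes, hence to all the $e_i$ except its own partner and to all the $f_j$.
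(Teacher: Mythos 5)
Your overall strategy (induction on the Witt index, splitting off one hyperbolic plane at a time, invoking the modified dual for the $p$-modular situation) is the same as the paper's, but your per-vector organization leaves a genuine gap precisely at the step you yourself flag as delicate: the $p$-modular peeling. Three things go wrong there. First, the isotropy correction: with $b(e_n,y)=p$ you must replace $y$ by $y-(Q(y)/p)e_n$, and $Q(y)/p$ need not lie in $R$ (total evenness gives $Q(y)\in b(y,\Lambda)$, which may be all of $R$ even though $b(e_n,\Lambda)=pR$). Second, even after producing an isotropic $f_n$ with $b(e_n,f_n)=p$, the plane $J=Re_n\oplus Rf_n$ need not split $\Lambda$: splitting requires $b(f_n,\Lambda)\subseteq pR$, equivalently $p^{-1}f_n\in\Lambda^\#$, and this can fail. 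Concretely, for $\Lambda=(Ru+Rv)\perp(Re+Rf)$ with $b(u,v)=1$, $b(e,f)=p$, take $e_n=e$ and $f_n=f+u$: then $b(e_n,f_n)=p$ and $f_n$ is isotropic, but $b(f_n,v)=1$, so $Re_n\oplus Rf_n$ is not an orthogonal summand of $\Lambda$ and its modified dual is not even contained in $\Lambda^\ast$. Your appeal to Lemma 2.6 cannot repair this, since that lemma \emph{assumes} the splitting. Third, the adjusted vectors $e_i'=e_i-p^{-1}b(e_i,f_n)e_n$ for $i<n$ need not lie in $\Lambda$ when some $b(e_i,f_n)$ is a unit, so your claim that $X\cap\Lambda_1$ has rank $n-1$ is not justified in the mixed case where some $b(e_i,\Lambda)=R$ and others equal $pR$.

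The paper sidesteps all of this by making the case distinction globally on $b(X,\Lambda)$ rather than vector by vector. If $b(X,\Lambda)=R$ it splits off a \emph{unimodular} plane (where the correction $y\mapsto y-Q(y)x$ and the splitting are unproblematic) and inducts; if $b(X,\Lambda)=pR$, then \emph{every} primitive vector of $X$ pairs into $pR$, so $p^{-1}X$ lies in $\Lambda^\ast$ and is maximal totally isotropic there with $b^\ast(p^{-1}X,\Lambda^\ast)=R$, reducing the whole problem to the unimodular case in one stroke; one then checks that $b(e_i,f_i^\ast)=p$ cannot occur and dualizes back. To rescue your argument you would have to add exactly this ingredient: always peel off unimodular planes while any basis vector of $X$ has $b(e_i,\Lambda)=R$, and pass to $\Lambda^\ast$ only when $b(X,\Lambda)=pR$ holds for the entire remaining $X$ --- at which point you have reproduced the paper's proof.
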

\begin{proof}
We use induction on $n=\rk(X)$. For $n=0$ the space $V$ is $\{0\}$ in
the alternating case, $\{0\}$ or anisotropic in the symmetric case,
and the assertion is trivial.

Let $n\ge 1$ and assume the assertion to be true for $\rk(X)<n$.

If $b(X,\Lambda)=R$ we choose $x\in X, y\in \Lambda$ with
$b(x,y)=1$. Replacing $y$ by $y-Q(y)x$ if necessary we may assume $y$
to be isotropic so that $Rx+Ry$ is a unimodular hyperbolic plane. We
can then split $\Lambda$ as $(Rx+Ry)\perp \Lambda'$, with
$\Lambda'\cap X=\{z \in X \mid b(z,y)=0\}$ maximal totally isotropic
in $\Lambda'$ of rank $n-1$, and the
induction hypothesis implies the assertion in this case.

If $b(X,\Lambda)=pR$ we have $p^{-1}X\subseteq \Lambda^\ast$ with
$b^\ast(p^{-1}x, \Lambda^\ast)=R$ for all primitive vectors $x\in X$,
in particular we see that $p^{-1}X$ is primitive and hence maximal
totally isotropic in $\Lambda^\ast$.

Using the first case we find
a basis
  $(p^{-1}e_1,\ldots, p^{-1}e_n)$ of $p^{-1}X$ and  vectors $f_1^\ast, \ldots,f_n^\ast
  \in \Lambda^\ast$  generating a totally isotropic submodule of $\Lambda^\ast$
  and satisfying $b(e_i,f_j^\ast)\in \{\delta_{ij},p\delta_{ij}\}$ such
  that 
\begin{equation}
\Lambda^\ast=\bigoplus_{i=1}^nRp^{-1}e_i\oplus
  \bigoplus_{i=1}^n Rf_i^\ast\perp K,\end{equation}
 where $K$ is the unique maximal lattice on an
 anisotropic subspace $FK$ of $V$.
Since we have $b(e_i, \Lambda^\ast)=b^\ast(p^{-1}e_i,\Lambda^\ast)=R$
for all $i$ we see that the case $b(e_i,f_i^\ast)=p$ can not occur.
 We set $f_i=pf_i^\ast$ for all $i$ and 
  obtain the assertion by taking the modified dual of both sides of
  the last equation.
\end{proof}
An obvious consequence is:
\begin{corollary}\label{orbits}
  With $\Lambda, b, Q$ as in the proposition all maximal totally
  isotropic submodules of $\Lambda$ are in the same orbit under the
  action of the isometry group of $(\Lambda, b, Q)$. 

In particular, in the alternating case the symplectic group of
$(\Lambda,b)$ (also called a local paramodular group of level $p$ if
$\Lambda$ has both unimodular and $p$-modular components )
acts transitively on the set of maximal totally 
isotropic submodules, a fact which is well known for the integral
symplectic group.
\end{corollary}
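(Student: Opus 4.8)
The plan is to read the corollary off Theorem~\ref{isotropic_bases_theorem}: given two maximal totally isotropic submodules $X,X'\subseteq\Lambda$, I would produce an isometry of $(\Lambda,b,Q)$ carrying $X$ to $X'$ by comparing the two hyperbolic-type decompositions the theorem attaches to them. So first I apply the theorem to $X$, obtaining a basis $(e_1,\dots,e_n)$ of $X$, isotropic vectors $f_1,\dots,f_n\in\Lambda$ spanning a totally isotropic submodule, with $b(e_i,f_j)\in\{\delta_{ij},p\delta_{ij}\}$, and an anisotropic maximal lattice $K$ such that $\Lambda=\bigoplus_i Re_i\oplus\bigoplus_i Rf_i\perp K$; then I apply it to $X'$ to get $(e_i'),(f_i'),K'$ likewise.

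The one point needing care is that the two decompositions have the same ``shape''. Since the $e_i$ and the $f_i$ each span totally isotropic submodules and $b(e_i,f_j)=0$ for $i\ne j$, the hyperbolic part $\bigoplus_i Re_i\oplus\bigoplus_i Rf_i$ is the orthogonal sum of the binary lattices $Re_i+Rf_i$, each of which is a unimodular or $p$-modular hyperbolic plane according as $b(e_i,f_i)$ equals $1$ or $p$. Hence, writing $b(e_i,f_i)=p^{r_i}$, the elementary divisors of $\Lambda$ in its dual lattice $\Lambda^\#$ are $\bigsqcup_i\{p^{r_i},p^{r_i}\}$ together with those of $K$ in $K^\#$. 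As $K$ is the unique maximal lattice on an anisotropic kernel of $V$, which by the Witt decomposition theorem is determined up to isometry, the latter contribution is fixed, so the multiset $\{b(e_i,f_i)\}_{i=1}^n$ is an invariant of $(\Lambda,b,Q)$ and in particular equals $\{b(e_i',f_i')\}_i$. After reindexing I may thus assume $b(e_i,f_i)=b(e_i',f_i')$ for every $i$.

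Now I would build the isometry. The subspaces $FK$ and $FK'$ are both orthogonal complements in $V$ of a $2n$-dimensional sum of hyperbolic planes, hence both are anisotropic kernels of $V$; by the Witt decomposition theorem they are isometric, and since $K$ and $K'$ are the unique maximal lattices on them, any isometry $FK\to FK'$ restricts to an isometry $\phi\colon K\to K'$. Define $\sigma\colon\Lambda\to\Lambda$ by $\sigma(e_i)=e_i'$, $\sigma(f_i)=f_i'$ and $\sigma|_K=\phi$. It is an $R$-linear bijection respecting the orthogonal decomposition, and it preserves $b$ and $Q$ on each summand (on $Re_i+Rf_i$ because $Q(e_i)=Q(f_i)=0$ and $b(e_i,f_i)=b(e_i',f_i')$, on $K$ because $\phi$ is an isometry), so $\sigma$ is an isometry of $(\Lambda,b,Q)$ with $\sigma(X)=X'$. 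This gives transitivity, and the alternating statement is the special case $Q\equiv 0$, in which $FK=\{0\}$ and the isometry group of $(\Lambda,b)$ is its symplectic group. I do not expect a genuine obstacle: the only slightly delicate point is the invariance of the shape $\{b(e_i,f_i)\}_i$, which rests on the invariance of elementary divisors together with the uniqueness up to isometry of the anisotropic kernel.
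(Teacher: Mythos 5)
Your proof is correct and follows essentially the same route as the paper, which presents Corollary~\ref{orbits} as an immediate consequence of Theorem~\ref{isotropic_bases_theorem}: apply the theorem to both maximal totally isotropic submodules and match the resulting decompositions by an isometry. Your careful justification that the multiset $\{b(e_i,f_i)\}_i$ is an invariant (via the elementary divisors of $\Lambda^\#/\Lambda$ and the uniqueness of the maximal lattice on the anisotropic kernel) is exactly the detail the paper leaves implicit, and it is sound.
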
 

\begin{theorem}
  Let $\Lmax\nu=\Lambda_\nu^{(0)}\supseteq \dots \supseteq
  \Lambda_\nu^{(n)}=\Lmin\nu$ for $\nu=1,2$ be two maximal admissible lattice chains on $V$.

  Then there exist isotropic vectors
 $e_1,\ldots,e_n,f_1,\ldots,f_n \in \Lambda^{(0)}_1$ with
 $b(e_i,e_j)=0=b(f_i,f_j), b(e_i,f_j)=\delta_{ij}$
 such that 
 \begin{equation}
   \label{eq:2}
  \Lambda^{(0)}_1=\bigoplus_{i=1}^nRe_i \oplus \bigoplus_{i=1}^nRf_i\perp
   K,
 \end{equation}
where $K$ is the unique maximal  lattice on the anisotropic orthogonal
complement of the space generated by the $e_i,f_i$, and such that
\begin{equation}
  \label{eq:3}
 \Lambda_\nu^{(j)}=\bigoplus_{i=1}^nRp^{r_i^{(\nu,j)}}e_i \oplus \bigoplus_{i=1}^nRp^{s_i^{(\nu,j)}}f_i\perp
   K
\end{equation}
holds with certain integers $r_i^{(\nu,j)},s_i^{(\nu,j)}$ for
$\nu=1,2$ and $0\le j \le n$.
\end{theorem}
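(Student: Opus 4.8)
The plan is to argue by induction on the Witt index $n$ of $V$. For $n=0$ the space $V$ is anisotropic or zero, the maximal lattice is unique, and both chains reduce to $K=\Lambda^{(0)}$, so there is nothing to prove. For $n\ge1$ I would peel off one common hyperbolic pair, the key object being a single isotropic vector $e_1$ that is simultaneously adapted to both chains.

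Suppose such an $e_1$ has been found, meaning: for each $\nu$ a suitable power $p^{c_\nu}e_1$ is primitive in every $\Lambda_\nu^{(j)}$ and $b(e_1,\Lambda_\nu^{(j)})$ is $R$ or $pR$, monotone in $j$. Then for each $\nu$ and $j$ one can split off from $\Lambda_\nu^{(j)}$ a unimodular or $p$-modular hyperbolic plane containing (a scalar multiple of) $e_1$, and the lemma of Section 2 on removing such a plane shows the orthogonal complement is again almost $p$-elementary totally even. Passing to $\bar V:=e_1^\perp/Fe_1$, the reduced lattices $\bar\Lambda_\nu^{(j)}$ (the images of $\Lambda_\nu^{(j)}\cap e_1^\perp$) are these complements; their chain has all steps of index $1$ or $p$ and runs from a maximal to a minimal lattice on $\bar V$, which has Witt index $n-1$, so exactly one step is trivial (the --- generally $\nu$-dependent --- step where $e_1$'s plane flips from unimodular to $p$-modular), and deleting it yields a maximal admissible chain on $\bar V$. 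The induction hypothesis applied to the two chains on $\bar V$ gives isotropic $e_2,\dots,e_n,f_2,\dots,f_n$, which I lift into $\Lambda_1^{(0)}\cap e_1^\perp$, and the anisotropic lattice $K$. Finally I take $f_1$ to be the isotropic partner of $e_1$ in $\Lambda_1^{(0)}$ that is orthogonal to all $e_i,f_i$ with $i\ge2$; this line is uniquely determined by these conditions, so it is automatically the partner line for chain $2$ as well, and one checks directly that $e_1,f_1,\dots,e_n,f_n,K$ satisfy \eqref{eq:2} and \eqref{eq:3}, the exponents for $i\ge2$ coming from the inductive step (re-indexed by the deleted level) and those for $i=1$ recording the modularity of $\Lambda_\nu^{(j)}\cap(Fe_1+Ff_1)$.

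The step I expect to be the main obstacle is producing $e_1$ adapted to both chains. Because $\Lambda_\nu^{(n)}\subseteq\Lambda_\nu^{(j)}$ for all $j$, I would begin with a maximal totally isotropic submodule $X_\nu$ of the minimal lattice $\Lambda_\nu^{(n)}$, chosen to be in addition saturated in $\Lambda_\nu^{(0)}$, so that the primitive vector on $Fe\cap\Lambda_\nu^{(0)}$ already lies in $\Lambda_\nu^{(n)}$ for every line $Fe\subseteq FX_\nu$ --- this is what makes such a line adapted to the whole chain $\nu$, and not merely to $\Lambda_\nu^{(0)}$ and $\Lambda_\nu^{(n)}$, which on their own do not determine the chain. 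Putting the two maximal totally isotropic subspaces $FX_1,FX_2$ into a common hyperbolic frame according to their relative position, I would take $Fe_1$ to be a line contained in $FX_1\cap FX_2$; a primitive isotropic vector of the maximal lattice $\Lambda_1^{(0)}$ automatically has $b(e_1,\Lambda_1^{(0)})=R$ (otherwise $\Lambda_1^{(0)}$ could be enlarged), and similarly for $\Lambda_2^{(0)}$, which gives the required properties of $e_1$. The delicate issues are that one must choose the $X_\nu$ so that a single isotropic line is adapted to both chains at once, and that when $FX_1\cap FX_2=0$ (the ``opposite'' position) this particular construction fails and one has to modify the $X_\nu$ or proceed directly; granting $e_1$, the verifications that the reduced chains are maximal admissible (using Lemma~\ref{properties_p-elementary} and Theorem~\ref{isotropic_bases_theorem}) and that $f_1$ works out are routine bilinear computations.
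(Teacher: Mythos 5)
Your global strategy --- induct on the Witt index by splitting off, from every lattice of both chains simultaneously, a unimodular or $p$-modular hyperbolic plane supported on one common isotropic line --- is exactly the paper's. But the two places where you defer the work are precisely where the entire difficulty of the theorem lives, and as written they are genuine gaps. First, the construction of $e_1$ via a line in $FX_1\cap FX_2$ simply does not cover the case $FX_1\cap FX_2=0$, which you acknowledge but do not resolve; this ``opposite position'' is not a degenerate corner but the generic situation for two unrelated chains, and no choice of saturated maximal totally isotropic $X_\nu\subseteq\Lmin\nu$ can be guaranteed to avoid it. The paper's proof spends essentially all of its length here: it fixes the relative scale $r$ with $p^r\Lmax1\subseteq\Lmin2$ and $p^{r-1}\Lmax1\not\subseteq\Lmin2$, chooses an isotropic $x\in\Lmax1$ with $p^rx$ (or $p^{r-1}x$) primitive in $\Lmin2$ resp.\ $\Lmax2$, and then makes an extremal choice (maximizing the level $k$ with $x\in\Lk1$, and a further index $i$ when $k=n$) whose whole purpose is to derive contradictions, via Lemma~\ref{properties_p-elementary}(c) and dualization, that rule out the configurations in which no common splitting vector exists. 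Your relative-position frame for the subspaces $FX_1,FX_2$ sees none of this lattice-level divisibility information.

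Second, even granting $e_1$, you do not actually produce the common partner $f_1$, and the uniqueness argument you invoke for the line $Ff_1$ fails: in the alternating case every vector of the residual hyperbolic plane is isotropic, so the line is not determined at all, and in the quadratic case the argument is circular, because it presupposes that $e_2,\dots,e_n,f_2,\dots,f_n$ are the \emph{same vectors of $V$} for both chains. Your induction in $\overline V=e_1^{\perp}/Fe_1$ only yields them modulo $Fe_1$: the orthogonal complements of $e_1$'s hyperbolic plane in $\Lambda_1^{(j)}$ and in $\Lambda_2^{(j)}$ are a priori different subspaces of $V$ (the partner lines need not coincide), so a lift adapted to chain $1$ describes the lattices of chain $2$ only up to translations by multiples of $e_1$, and removing these requires a further argument you have not supplied. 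The paper sidesteps quotients entirely: in each case it exhibits an explicit isotropic $y$ (e.g.\ $y\in\Lj2$ with $b(p^rx,y)=1$, or $y=py'$ with $y'\in(\Ljp2)^{\ast}$) such that the planes $Rp^ax+Rp^by$ split off orthogonally in \emph{every} lattice of \emph{both} chains before the induction hypothesis is applied, so that the residual chains sit in one fixed orthogonal complement inside $V$. To repair your proposal you would need to either handle the transversal case and the common-partner problem directly --- at which point you are reconstructing the paper's case analysis --- or find a genuinely different mechanism for producing the pair $(e_1,f_1)$.
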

\begin{proof} We prove the assertion by induction on the Witt index
  $n$ of $V$. The case $n=0$ is trivial, so we assume for the rest of
  the argument $n>0$. In that case 
  we notice first that $\Lmax\nu$ is generated by its isotropic vectors
  by Lemma \ref{properties_p-elementary}
  and that $b(x,\Lmax\nu)=R$ holds for each primitive isotropic vector
  $x\in \Lmax\nu$. On the other hand, $b(x,\Lmin\nu)= pR$ holds for all
  primitive isotropic $x\in \Lmin\nu$.
  
There exists $r\in \N$ with $p^r\Lmax1\subseteq \Lmin2,
p^r\Lmax2\subseteq \Lmin1$, without loss of generality we may assume
$p^{r-1}\Lmax1\not \subseteq \Lmin2$. Since $\Lmax1$ is generated by
isotropic vectors we find $x \in \Lmax1$ isotropic such that $p^rx$ is primitive
in $\Lmin2$. As noticed above we must have $b(p^rx, \Lmin2)=pR$.

Assume that $x$ can be chosen such that $p^rx$ is primitive in
$\Lmax2$.
One has  then $b(p^rx, \Lmax2)=R$, and there is $0\le j<n$
with $b(p^rx, \Lj2)=R, b(p^rx,\Ljp2)=pR$.
We choose then $x$ with $p^rx$ primitive in $\Lmax2$ such that the largest integer $k$ with $x\in \Lk1$ is as
large as possible.


Choose $y\in \Lj2$ with $b(p^rx,y)=1$, we have then $py$ primitive in
$\Ljp2$ and $py \in \Lmin2$, moreover $b(py, \Lmax2)=pR$ and hence
$b(Rp^rx+Rpy, \Li2)=pR$ for all $i>j$. This implies that $Rp^rx+
Rpy$ splits off orthogonally in $\Ljp2,\ldots, \Lmin2$, whereas
$Rp^rx+Ry$ splits off orthogonally in $\Lmax2,\ldots,\Lj2$.

If $x$
could be chosen to be in $\Lmin1$ we can split off $Rx+Rp^ry$ orthogonally in
all $\Li1$ and proceed by induction on $n$.
Otherwise there is $0\le k<n$ with $x\in \Lk1$ and  $px$ primitive in $\Lkp1$. With $y$ as
above the unimodular hyperbolic plane $Rx+Rp^ry$ splits off
orthogonally in $\Lmax1,\ldots,\Lk1$. If $b(p^ry, \Lkp1)=pR$ holds,
$Rpx+Rp^ry$ splits off orthogonally in $\Lkp1,\ldots,
\Lmin1$, and we can proceed by induction. Otherwise there exists $x'\in \Lkp1$ with $b(p^rx',
\Lmax2)=R$, hence $p^rx'$  primitive in $\Lmax2$, which contradicts
our choice of $x$, so this situation can not occur and we are done with the
case that $p^rx$ can be chosen to be primitive in $\Lmax2$.

\medskip

We are left with the case that $p^rx$ can not be chosen to be primitive in $\Lmax2$, hence
$p^{r-1}x \in \Lmax2$ for all isotropic $x \in \Lmax1$, which implies
$p^{r-1}\Lmax1\subseteq \Lmax2$. If one has
$p^{r-1}\Lmax2\not\subseteq \Lmax1$ we may interchange the two chains
and reduce to the previous case, so we may assume that
$p^{r-1}\Lmax2\subseteq \Lmax1$ holds as well.

Since $p^{r-1}\Lmax1 \not\subseteq \Lmin2$ holds by assumption and
$p^{r-2}x \in \Lmax2$ implies $p^{r-1}x \in \Lmin2$  we
 can choose $x \in\Lmax1$ isotropic with
$p^{r-1}x$ primitive in $\Lmax2$. Among such $x$ we choose one for which 
 the largest integer $k$ with $x\in \Lk1$ is maximal.  If we have
 $k=n$ there exists $0\le i<n$ such that we can choose $x\in p
 (\Lip1)^\ast \subseteq \Lmin1$
 and we impose the additional condition that the least integer $i$
  with $x\in p(\Lip1)^\ast$ is minimal. We fix these integers $i,k$
  for the rest of the proof.
 
We notice that if $k<n$
holds for the maximal $k$ above, $p^{r-2}x \in \Lmax2$ holds for all
isotropic $x\in \Lmin1$, and we obtain
\begin{equation*}p^{r-1}\Lmax2= p^{r-1}\{z\in (\Lmax2)^\#\mid Q(z)\in
  R\}\subseteq  p\{x\in \Lmin1\mid Q(x)\in
  pR\}^\# \subseteq \Lmin1 
\end{equation*} by dualizing and applying Lemma \ref{properties_p-elementary}.
Moreover, if $k=n$ holds we have similarly
\begin{equation*}
  p^{r-2}p (\Li1)^\ast\subseteq \Lmax2, \quad p^{r-1}\Lmax2\subseteq \Li1. 
\end{equation*}

If we have $p^{r-1}x \in \Lmin2$  the primitivity of $p^{r-1}x$ in
$\Lmax2$ implies $b(p^{r-1}x, \Lj2)=R,
b(p^{r-1}x,\Ljp2)=pR$ for some $0\le j <n$. 
We choose  $y\in \Lj2$ isotropic with $b(p^{r-1}x,y)=1$ and have
$py$ primitive in $\Ljp2,\ldots, \Lmin2$. The unimodular hyperbolic
plane $Rp^{r-1}x+Ry$ then splits off orthogonally in
$\Lmax2,\ldots,\Lj2$ and the $p$-modular hyperbolic plane $Rp^{r-1}x+Rpy$
splits off orthogonally in $\Ljp2,\ldots,\Lmin2$.

\medskip
If on the other hand $p^{r-1}x \not\in \Lmin2$ holds there
exists $0\le j<n$ with $p^{r-1}x \in \Lj2, p^{r-1}x\not\in \Ljp2$.
Since $p^rx$ is primitive in $\Ljp2$ with $b(p^rx,\Ljp2)=pR$, we have
$p^rx\in p(\Ljp2)^\ast$ primitive and find isotropic $y'\in (\Ljp2)^\ast$ with
$b^\ast(p^{r-1}x, y')=1$. 
With $y=py'\in \Lmin2$ we have $b(p^rx,y)=p$ and $b(y, \Ljp2)=pR$, and
the $p$-modular hyperbolic plane $Rp^rx+Ry$ splits off orthogonally in
$\Ljp2,\ldots, \Lmin2$, whereas the unimodular hyperbolic plane
$Rp^{r-1}x+Ry$ splits off orthogonally in $\Lmax2,\ldots,\Lj2$.

In both cases, by our assumptions we have $p^{r-1}y \in \Lmax1$. If in addition
$p^{r-1}y\in \Lmin1$ 
holds, the unimodular hyperbolic plane $Rx+Rp^{r-1}y$ splits off
orthogonally in $\Lmax1,\ldots,\Lk1$. If we had $b(p^{r-1}y, \Lkp1)=R$
this would imply $k+1<n$ and the existence of a vector $x'\in \Lkp1$
with $b(p^{r-1}x',\Lmax2)=R$, which contradicts the maximality
property of $k$. The $p$-modular hyperbolic plane $Rpx+Rp^{r-1}y$
therefore splits off orthogonally in $\Lkp1,\ldots, \Lmin1$.

If $p^{r-1}y \not \in \Lmin1$ holds we have seen that we must have
$k=n$ and hence $x\in p(\Lip1)^\ast\subseteq \Lmin1$ and $b(x,
\Li1)=R$, $p^{r-1}\Lmax2 \subseteq (\Li1)$, hence
$p^{r-1}y\in \Li1$. From $b(x,p^{r-1}y)=1$ we see that $y\not\in
\Lip1$ so that $p^ry$ is primitive in $\Lip1,\ldots, \Lmin1$.
 
The unimodular hyperbolic plane $Rx+Rp^{r-1}y$ then splits  off orthogonally in
$\Lmax1,\ldots, \Li1$, whereas the $p$-modular hyperbolic plane
$Rx+Rp^ry$ splits off orthogonally in $\Lip1,\ldots,\Lmin1$, 
and  we can again reduce the Witt
index $n$ by $1$ and obtain the assertion from the induction hypothesis.
\end{proof}


\medskip
Rainer Schulze-Pillot\\
Fachrichtung Mathematik\\
Universit\"at des Saarlandes\\
Postfach 151150, 66041 Saarbr\"ucken, Germany\\
email: schulzep@math.uni-sb.de

\end{document}